\newtheorem{theorem}{Theorem}[section]
\newtheorem{corollary}[theorem]{Corollary}
\newtheorem{lemma}[theorem]{Lemma}
\theoremstyle{definition}
\newtheorem{remark}{Remark}
\newcommand{\eps}{\varepsilon}
\DeclareMathOperator*{\argmin}{arg\,min}
\DeclareMathOperator{\range}{Ran}
\newcommand{\field}[1]{\mathbb{#1}}
\newcommand{\R}{\field{R}}
\newcommand{\N}{\field{N}}
\DeclareMathOperator{\hyp}{hyp}
\DeclareMathOperator{\len}{length}
\DeclareMathOperator{\area}{area}
\DeclareMathOperator{\inn}{int}
\DeclareMathOperator{\Emb}{Emb}
\newcommand{\inner}[3][n]{\SwitchBracketsizeLeft{#1}\LeftBracketSize\langle#2,#3\SwitchBracketsizeRight{#1}\RightBracketSize\rangle}
\newcommand{\abs}[2][n]{\SwitchBracketsizeLeft{#1}\LeftBracketSize\lvert#2\SwitchBracketsizeRight{#1}\RightBracketSize\rvert}
\newcommand{\norm}[2][n]{\SwitchBracketsizeLeft{#1}\LeftBracketSize\lVert#2\SwitchBracketsizeRight{#1}\RightBracketSize\rVert}
\newcommand{\set}[3][b]{\SwitchBracketsizeLeft{#1}\LeftBracketSize\{#2:#3\SwitchBracketsizeRight{#1}\RightBracketSize\}}
\newcommand{\NextScriptStyle}[1]{{\scriptstyle{#1}}}
\newcommand{\NextScriptScriptStyle}[1]{{\scriptscriptstyle{#1}}}
\newcommand{\NextTextStyle}[1]{{\textstyle{#1}}}
\newcommand{\NextDisplayStyle}[1]{{\displaystyle{#1}}}
\newcommand{\SwitchBracketsizeLeft}[1]{
  \ifthenelse{\equal{#1}{b}\OR\equal{#1}{big}}{\let\LeftBracketSize=\bigl}{
    \ifthenelse{\equal{#1}{B}\OR\equal{#1}{Big}}{\let\LeftBracketSize=\Bigl}{
      \ifthenelse{\equal{#1}{g}\OR\equal{#1}{bigg}}{\let\LeftBracketSize=\biggl}{
    \ifthenelse{\equal{#1}{G}\OR\equal{#1}{Bigg}}{\let\LeftBracketSize=\Biggl}{
      \ifthenelse{\equal{#1}{s}\OR\equal{#1}{small}}{\let\LeftBracketSize=\NextScriptStyle}{
        \ifthenelse{\equal{#1}{ss}}{\let\LeftBracketSize=\NextScriptScriptStyle}{
          \ifthenelse{\equal{#1}{t}\OR\equal{#1}{text}}{\let\LeftBracketSize=\NextTextStyle}{
        \ifthenelse{\equal{#1}{d}\OR\equal{#1}{display}}{\let\LeftBracketSize=\NextDisplayStyle}{
          \ifthenelse{\equal{#1}{a}\OR\equal{#1}{auto}}{\let\LeftBracketSize=\left}{
            \let\LeftBracketSize=\relax}}}}}}}}}}
\newcommand{\SwitchBracketsizeRight}[1]{
  \ifthenelse{\equal{#1}{b}\OR\equal{#1}{big}}{\let\RightBracketSize=\bigr}{
    \ifthenelse{\equal{#1}{B}\OR\equal{#1}{Big}}{\let\RightBracketSize=\Bigr}{
      \ifthenelse{\equal{#1}{g}\OR\equal{#1}{bigg}}{\let\RightBracketSize=\biggr}{
    \ifthenelse{\equal{#1}{G}\OR\equal{#1}{Bigg}}{\let\RightBracketSize=\Biggr}{
      \ifthenelse{\equal{#1}{s}\OR\equal{#1}{small}}{\let\RightBracketSize=\NextScriptStyle}{
        \ifthenelse{\equal{#1}{ss}}{\let\RightBracketSize=\NextScriptScriptStyle}{
          \ifthenelse{\equal{#1}{t}\OR\equal{#1}{text}}{\let\RightBracketSize=\NextTextStyle}{
        \ifthenelse{\equal{#1}{d}\OR\equal{#1}{display}}{\let\RightBracketSize=\NextDisplayStyle}{
          \ifthenelse{\equal{#1}{a}\OR\equal{#1}{auto}}{\let\RightBracketSize=\right}{
            \let\RightBracketSize=\relax}}}}}}}}}}
\title[Convergence Rates for a Non-differentiable Operator]{An Application of Source Inequalities 
for Convergence Rates of Tikhonov Regularization 
with a Non-differentiable Operator}
\author{Markus Grasmair}
\address{Computational Science Center, University of Vienna, Vienna, Austria}
\begin{document}

\begin{abstract}
In this paper we study Tikhonov regularization
for the stable solution of an ill-posed non-linear
operator equation.
The operator we consider, which is related
to an active contour model for image segmentation,
is continuous, compact, but nowhere differentiable.
Nevertheless we are able to derive convergence
rates under different smoothness assumptions on
the true solution by employing the method
of variational or source inequalities.
With this approach, we can prove up to linear
convergence with respect to the norm.
\bigskip

\textsc{Keywords.}
Tikhonov regularization; convergence rates;
source inequalities; non-linear ill-posed problems;
active contours.
\bigskip

\textsc{AMS subject classifications:}
65J20; 65J22;
\end{abstract}

\maketitle

\section{Introduction}

The study of convergence rates within the context of
solutions of ill-posed problems is concerned with the question
of how good one can approximate the true solution of an equation
given some noisy data of known noise level $\delta$.
Assume that $A\colon X \to Y$ is a bounded linear operator between two Hilbert
spaces $X$ and $Y$ and we are trying to solve the linear operator
equation $Ax = y^\delta$ with noisy right hand side $y^\delta$
satisfying $\norm{y-y^\delta} \le \delta$ for some ``true'' data $y \in Y$.
Then the classical Tikhonov approach consists
in the minimization of the functional
\begin{equation}\label{eq:A}
\norm{Ax-y^\delta}_Y^2 + \alpha\norm{x}_X^2
\end{equation}
with some regularization parameter $\alpha > 0$.
If $\delta$ and $\alpha$ tend to zero in such a way that $\delta^2/\alpha \to 0$,
then the minimizers $x_\alpha^\delta$ of~\eqref{eq:A} will
converge to the ``true'' solution
\[
x^\dagger = A^\dagger y
\]
with $A^\dagger$ being the Moore--Penrose inverse of $A$ \cite{Gro84}.
Moreover, it is possible to estimate the quality of the approximation,
that is, the difference $\norm{x_\alpha^\delta-x^\dagger}$,
provided one has additional a-priori knowledge about smoothness properties
of $x^\dagger$:
If $x^\dagger = (A^*A)^\nu\omega$ for some $\omega \in X$ and $0 < \nu \le 1$,
then one can show for the parameter choice
$\alpha \sim \delta^{\frac{2}{2\nu+1}}$ that we have
\cite[Corollary~3.1.4]{Gro84}
\begin{equation}\label{eq:ratelin}
\norm{x_\alpha^\delta - x^\dagger} = O(\delta^{\frac{2\nu}{2\nu+1}})
\qquad\text{ as } \delta \to 0.
\end{equation}

In the case of a non-linear operator equation $F(x) = y$,
the classical approach to the derivation of convergence rates
uses a linearization of $F$ at the true solution $x^\dagger$.
Then, analogous convergence rates as in the linear case
are derived under the assumption of a source condition of the form
\begin{equation}\label{eq:sourcenonlin}
\bigl(F'(x^\dagger)^*F'(x^\dagger)\bigr)^\nu \omega = x^\dagger.
\end{equation}
In the proofs of these convergence rates, however,
it is necessary to introduce additional assumptions
coupling the non-linear functional $F$ and its linearization at $x^\dagger$.

An alternative approach to non-linear problems has been
suggested in~\cite{HofKalPoeSch07}.
Instead of linearizing the problem and using source equalities,
they proposed to use a single \emph{source inequality} of the form
\begin{equation}\label{eq:sourcei}
\norm{x-x^\dagger}_X^2
\le \norm{x}^2_X-\norm{x^\dagger}_X^2 + \beta_2\norm{F(x)-F(x^\dagger)}_Y.
\end{equation}
With this assumption, they were able to prove a convergence rate
in the norm of order $O(\sqrt{\delta})$.
This corresponds to the case $\nu = 1/2$ in~\eqref{eq:ratelin};
indeed, in the linear case, the source inequality~\eqref{eq:sourcei}
is equivalent to the source condition $x^\dagger = (A^*A)^{\frac{1}{2}}\omega$
(see~\cite{HofKalPoeSch07}).
Moreover, the approach has been generalized in~\cite{BotHof10,Gra10b}
to intermediate convergence rates, roughly corresponding to
the source condition $x^\dagger = (A^*A)^\nu \omega$ with $0 \le \nu \le 1/2$.

Apart from the possibility of treating non-linear operators,
source inequalities have the advantage that they do not require
a Hilbert space structure of the set $X$;
indeed, the results in~\cite{HofKalPoeSch07} were formulated in
a Banach space setting (\cite{Gra10b} does not even require linear spaces).
One problem in Banach spaces is that fractional powers of a linear operator
are not defined any more; a source condition of the form~\eqref{eq:sourcenonlin}
cannot even be formulated.
There is, however, a different approach to convergence rates of general
order for non-linear problems in Banach spaces by means of
\emph{approximate source conditions} (see~\cite{Hei08,Hei09,HeiHof09,Hof06}).
Still this approach requires a prior linearization of the operator.

While theoretically the approach by source inequalities can
be used for deriving convergence rates in general non-smooth situations,
most cases where it is been applied were differentiable settings
with a weak coupling between the operator and its linearization.
In this paper we will show by means of a concrete example that the
method of source inequalities can also be used for deriving
convergence rates under meaningful conditions for a non-differentiable
operator. We consider a situation that is closely related
to an active contour model for image segmentation, where the operator,
in its natural Hilbert space setting, is H\"older continuous but nowhere
differentiable.
Nevertheless it is possible to derive convergence rates up to order $O(\delta)$
under the assumption of sufficient smoothness of the true solution
(in the sense that it lies in a certain Sobolev space).

\section{Setting}

Denote by $H^1_+(S^1)$ the non-negative cone in the Sobolev space $H^1(S^1)$.
That is, $\gamma \in H^1_+(S^1)$ if and only if
$\gamma \in H^1(S^1)$ and $\gamma(t) \ge 0$ for every $t \in S^1$.
If $\gamma \in H_+^1(S^1)$, we denote by
\[
\hyp(\gamma) := \set{(t,x) \in S^1\times\R_{\ge 0}}{0 \le x \le \gamma(t)}
\]
the \emph{hypograph} of $\gamma$.
That is, $\hyp(\gamma)$ consists of all points lying
between the $t$-axis and the graph of $\gamma$.
We now define the mapping $F\colon H^1_+(S^1) \to L^2(S^1\times \R_{\ge 0})$,
\[
F(u) := \chi_{\hyp(\gamma)}
\]
with
\[
\chi_{\hyp(\gamma)}(t,x) := 
\begin{cases}
  0 &\text{if } \gamma(t) > x,\\
  1 &\text{if } 0 \le \gamma(t) \le x,
\end{cases}
\]
which maps the curve $\gamma$ to the characteristic function
of its hypograph.

Note that every function $\gamma\in H_+^1(S^1)$ is 
bounded and continuous, and therefore 
$\hyp(\gamma) \subset S^1\times \R_{\ge 0}$ is a (non-empty) compact set.
In particular this implies that the mapping $F$ indeed takes values in the
space $L^2(S^1\times\R_{\ge 0})$.
Moreover it is obvious that $F$ is injective.

We now consider the problem of solving the equation
$F(\gamma) = u$ for some given $u \in \range(F)$ in a stable way.
In addition, we assume that, instead of $u$, we are only given noisy data
$u^\delta\in L^2(S^1\times\R_{\ge 0})$ satisfying
$\norm{u^\delta-u}_{L^2} \le \delta$, where $\delta > 0$ is some
known noise level.
Here we do not assume that the noisy data lie in the range of $F$
or even that they are the characteristic function of some (Borel) set.

In order to find an approximate solution of the equation
\begin{equation}\label{eq:eq}
F(\gamma) = u^\delta,
\end{equation}
we apply classical (quadratic) Tikhonov regularization
using the squared homogeneous first order Sobolev norm as regularization term.
That is, we consider the Tikhonov functional
\[
\mathcal{T}_\alpha(\gamma;u^\delta)
:= \norm{F(\gamma)-u^\delta}_{L^2}^2 + \alpha\norm{\dot{\gamma}}_{L^2}^2
\]
and define, for $\alpha > 0$,
an approximate solution of~\eqref{eq:eq} by
\[
\gamma_\alpha^\delta :\in \argmin\set{\mathcal{T}_\alpha(\gamma;u^\delta)}{\gamma \in H^1_+(S^1)}.
\]

\subsection*{Relation to Active Contours}

One of the basic problems in image processing is segmentation:
Given a possibly noisy image, interpreted as a function
$u \in L^2(\R^2)$, the task in its simplest form is to partition this image
into two regions, one representing objects in the image,
the other representing the background.
To that end, Chan and Vese \cite{ChaVes01} proposed a variational method
based on the assumption that the objects (which are assumed to form a simply connected region)
share a common grey value $c_1$,
and also the background has a uniform grey value $c_2 \neq c_1$.
Without loss of generality we may assume that $c_2 = 1$ and $c_1 = 0$.
Then the method proposed in~\cite{ChaVes01} consists in minimizing,
for some regularization parameters $\alpha_1$, $\alpha_2 > 0$,
the functional
\begin{equation}\label{eq:ChanVese}
  I(\gamma) := \alpha_1 \len(\gamma)+\alpha_2\area(\gamma)
  + \int_{\inn(\gamma)}\hskip-10pt (1-u(x))^2\,dx 
  + \int_{\R^2\setminus\inn(\gamma)}\hskip-15pt u(x)^2\,dx.
\end{equation}
Here $\len(\gamma)$ denotes the length of the curve $\gamma$,
$\inn(\gamma) \subset \R^2$ denotes the set of all points lying inside
the curve $\gamma$, and $\area(\gamma)$ the size of the area inside the curve.
This functional is minimized over the space $\Emb(S^1;\R^2)$ of all closed curves
continuously embedded in $\R^2$.

Defining the function $F\colon \Emb(S^1;\R^2) \to L^2(\R^2)$,
\[
F(\gamma) := \chi_{\inn(\gamma)},
\]
one can write the functional equivalently as
\[
I(\gamma) = \alpha_1 \len(\gamma)+\alpha_2\area(\gamma) + \norm{F(\gamma)-u}_{L^2}^2.
\]
Moreover, for $\gamma$ sufficiently smooth, one can write
\[
\len(\gamma) = \int_{S^1} \abs{\dot{\gamma}(t)}\,dt,
\qquad\qquad
\area(\gamma) = \frac{1}{2}\int_{S^1} \inner{\gamma(t)}{\dot{\gamma}(t)}\,dt.
\]

Thus the Chan--Vese model~\eqref{eq:ChanVese} can
be brought in a quite similar form as the problem we consider in this paper,
which, conversely, can be interpreted as a simplified active contour model
for images on a half-cylinder.
A major reason for our simplification is the fact that it allows
us to work in a Hilbert space setting.
In contrast, the ``natural'' setting for the Chan--Vese model would
rather be the Banach manifold of Lipschitz embeddings of $S^1$ in $\R^2$
modulo reparameterizations of $S^1$
(note that $I(\gamma)$ is invariant with respect
to reparameterizations of $\gamma$; in order to obtain any
uniqueness result it is therefore necessary to factor out
reparameterizations).

\section{Well-posedness of the Regularization Method}

We now prove that quadratic Tikhonov regularization
yields a well-posed regularization method for the functional $F$.
To that end we mainly have to investigate the continuity properties
of $F$.

\begin{lemma}\label{le:hoelder}
  The mapping $F$ is H\"older continuous of degree $1/2$
  as a mapping from $L^2_+(S^1)$ to $L^2(S^1\times\R_{\ge 0})$.
  In particular, it is weakly continuous and compact
  as a mapping from $H^1_+(S^1)$ to $L^2(S^1\times\R_{\ge 0})$.
\end{lemma}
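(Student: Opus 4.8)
The plan is to reduce the whole statement to a single fiberwise computation and then compose with the compact Sobolev embedding in one dimension. For the H\"older estimate, fix $\gamma_1,\gamma_2 \in L^2_+(S^1)$ and note that for almost every $t \in S^1$ the fiber of $F(\gamma_i) = \chi_{\hyp(\gamma_i)}$ over $t$ is the characteristic function of the interval $[0,\gamma_i(t)]$. Because the two intervals share the lower endpoint $0$ and both $\gamma_i(t) \ge 0$, their symmetric difference is an interval of one-dimensional Lebesgue measure exactly $\abs{\gamma_1(t)-\gamma_2(t)}$; since the integrand takes only the values $0$ and $1$, its square equals its modulus. Integrating first in $x$ and then in $t$ by Fubini's theorem I would obtain
\[
\norm{F(\gamma_1) - F(\gamma_2)}_{L^2}^2 = \int_{S^1} \abs{\gamma_1(t) - \gamma_2(t)}\,dt = \norm{\gamma_1 - \gamma_2}_{L^1}.
\]

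To convert the $L^1$ norm into an $L^2$ norm I would use that $S^1$ has finite measure, so that Cauchy--Schwarz (equivalently Jensen's inequality) gives $\norm{\gamma_1-\gamma_2}_{L^1} \le C\,\norm{\gamma_1-\gamma_2}_{L^2}$ with a constant $C$ depending only on the total measure of $S^1$. Combining this with the identity above yields
\[
\norm{F(\gamma_1) - F(\gamma_2)}_{L^2} \le C^{1/2}\,\norm{\gamma_1-\gamma_2}_{L^2}^{1/2},
\]
which is precisely H\"older continuity of degree $1/2$ from $L^2_+(S^1)$ to $L^2(S^1\times\R_{\ge 0})$.

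For the ``in particular'' claim, the key structural input is that in one dimension the embedding $H^1(S^1) \hookrightarrow L^2(S^1)$ is compact. For weak continuity, if $\gamma_n \rightharpoonup \gamma$ in $H^1(S^1)$ then the sequence is bounded, hence by compactness of the embedding $\gamma_n \to \gamma$ strongly in $L^2(S^1)$; the H\"older estimate then forces $F(\gamma_n) \to F(\gamma)$ strongly, and a fortiori weakly, in $L^2(S^1\times\R_{\ge 0})$. For compactness of $F$, I would take any bounded sequence in $H^1_+(S^1)$, extract an $L^2$-convergent subsequence via the compact embedding, and again invoke the H\"older estimate to conclude that the images converge in $L^2(S^1\times\R_{\ge 0})$; thus $F$ maps bounded sets to relatively compact sets.

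I do not expect a serious obstacle here, since the argument is the composition of an elementary measure-theoretic identity with a standard embedding theorem. The step requiring the most care is the fiberwise computation: one must justify Fubini's theorem (immediate, as the integrand is a bounded measurable characteristic function) and use the nonnegativity of the $\gamma_i$ to identify the symmetric difference of the two fibers with an interval of length $\abs{\gamma_1(t)-\gamma_2(t)}$. Everything else is routine bookkeeping.
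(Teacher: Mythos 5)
Your proof is correct and follows essentially the same route as the paper: the key identity $\norm{F(\gamma_1)-F(\gamma_2)}_{L^2}^2 = \norm{\gamma_1-\gamma_2}_{L^1}$ computed fiberwise, the Cauchy--Schwarz bound $\norm{\gamma_1-\gamma_2}_{L^1} \le \sqrt{2\pi}\,\norm{\gamma_1-\gamma_2}_{L^2}$, and the compact embedding $H^1(S^1) \hookrightarrow L^2(S^1)$ for the weak continuity and compactness claims. You merely spell out the details (Fubini, the symmetric-difference identity, the subsequence extractions) that the paper leaves implicit.
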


\begin{proof}
  Let $\gamma_1$, $\gamma_2 \in L^2_+(S^1)$.
  Then
  \begin{multline*}
    \norm{F(\gamma_1)-F(\gamma_2)}_{L^2}^2
    = \int_{S^1}\int_{\R\ge 0} \abs{\chi_{\hyp(\gamma_1)}(t,x)-\chi_{\hyp(\gamma_2)}(t,x)}^2\,dx\,dt\\
    = \int_{S^1} \abs{\gamma_1(t)-\gamma_2(t)}\,dt
    = \norm{\gamma_1-\gamma_2}_{L^1}
    \le \sqrt{2\pi}\norm{\gamma_1-\gamma_2}_{L^2},
  \end{multline*}
  which proves the H\"older continuity of $F$
  as a mapping from $L^2_+(S^1)$.
  The weak continuity and compactness of $F$ as a mapping
  from $H^1_+(S^1)$ now follow from the fact that
  $H^1(S^1)$ is compactly embedded in $L^2(S^1)$.
\end{proof}

\begin{theorem}\label{th:tikreg}
  The following hold:
  \begin{enumerate}
  \item For every $u \in L^2(S^1\times\R_{\ge 0})$ and $\alpha > 0$
    the functional $\mathcal{T}_\alpha(\cdot;u)$ attains its
    minimum in $H^1_+(S^1)$.
  \item Assume that $u \in L^2(S^1\times\R_{\ge 0})$ and $\alpha > 0$.
    Let $\{u^{(k)}\}_{k\in\N} \subset L^2(S^1\times\R_{\ge 0})$ be any
    sequence converging to $u$, and let
    \[
    \gamma^{(k)} \in \argmin\set{\mathcal{T}_\alpha(\gamma;u^{(k)})}{\gamma\in H^1_+(S^1)}.
    \]
    Then there exists a sub-sequence $\{\gamma^{(k_j)}\}_{j\in\N}$
    converging strongly in $H^1(S^1)$ to some
    \[
    \gamma \in \argmin\set{\mathcal{T}_\alpha(\gamma;u)}{\gamma\in H^1_+(S^1)}.
    \]
  \item Assume that $u = F(\gamma^\dagger)$ for some $\gamma^\dagger \in H^1_+(S^1)$.
    Let $\{\delta_k\}_{k\in\N} \subset \R_{>0}$ and $\{\alpha_k\}_{k\in\N} \subset \R_{>0}$ be 
    sequences satisfying $\delta_k \to 0$, $\alpha_k \to 0$, and $\delta_k^2/\alpha_k \to 0$,
    and let $u^{\delta_k} \in L^2(S^1\times\R_{\ge 0})$, $k\in\N$, satisfy $\norm{u_k-u}_{L^2} \le \delta_k$.
    Let
    \[
    \gamma^{(k)} := \gamma_{\alpha_k}^{\delta_k} 
    \in \argmin\set{\mathcal{T}_{\alpha_k}(\gamma;u^{\delta_k})}{\gamma\in H^1_+(S^1)}.
    \]
    Then the sequence $\{\gamma^{(k)}\}_{k\in\N}$ converges
    strongly in $H^1(S^1)$ to $\gamma^\dagger$.
  \end{enumerate}
\end{theorem}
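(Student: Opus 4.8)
The plan is to handle all three parts with one weak-compactness (direct-method) scheme; the only genuinely non-standard ingredient is a coercivity estimate forced upon us by the fact that the penalty $\alpha\norm{\dot\gamma}_{L^2}^2$ is merely a seminorm and does not by itself control the full $H^1$-norm. The key observation is that the fidelity term secretly supplies the missing control: setting $\gamma_2 = 0$ in the computation from the proof of Lemma~\ref{le:hoelder} gives $\norm{F(\gamma)}_{L^2}^2 = \norm{\gamma}_{L^1}$, so a bound on $\norm{F(\gamma)-u}_{L^2}$ bounds $\norm{\gamma}_{L^1}$ and hence, since $\gamma \ge 0$, the average value of $\gamma$. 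A Poincar\'e--Wirtinger inequality then converts a simultaneous bound on $\norm{\gamma}_{L^1}$ and $\norm{\dot\gamma}_{L^2}$ into a bound on $\norm{\gamma}_{H^1}$. Thus $\mathcal{T}_\alpha(\cdot;u)$ is coercive on $H^1_+(S^1)$, and any sequence along which $\mathcal{T}_\alpha$ stays bounded is bounded in $H^1$.

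For part (1) I would take a minimizing sequence $\gamma_n$ (the infimum is finite, e.g.\ $\mathcal{T}_\alpha(0;u) = \norm{u}_{L^2}^2$), invoke the coercivity estimate to extract a subsequence converging weakly in $H^1$ to some $\gamma$, and pass to the limit. Compactness of $F$ from Lemma~\ref{le:hoelder} gives $F(\gamma_{n_j}) \to F(\gamma)$ strongly in $L^2$, so the fidelity term converges; the penalty term is weakly lower semicontinuous; and $H^1_+(S^1)$, being closed and convex, is weakly closed, so $\gamma \ge 0$. Hence $\mathcal{T}_\alpha(\gamma;u) \le \liminf_j \mathcal{T}_\alpha(\gamma_{n_j};u)$ equals the infimum and $\gamma$ is a minimizer.

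Part (2) uses the same scheme. Comparing each minimizer with a fixed competitor $\tilde\gamma$ bounds $\mathcal{T}_\alpha(\gamma^{(k)};u^{(k)})$ uniformly (the $u^{(k)}$ are bounded), so by coercivity $\{\gamma^{(k)}\}$ is bounded in $H^1$; pass to a weak limit $\gamma$. For every $\tilde\gamma \in H^1_+(S^1)$ the chain
\begin{multline*}
\mathcal{T}_\alpha(\gamma;u) \le \liminf_j \mathcal{T}_\alpha(\gamma^{(k_j)};u^{(k_j)})
\le \limsup_j \mathcal{T}_\alpha(\gamma^{(k_j)};u^{(k_j)}) \\
\le \lim_j \mathcal{T}_\alpha(\tilde\gamma;u^{(k_j)}) = \mathcal{T}_\alpha(\tilde\gamma;u)
\end{multline*}
holds, using strong convergence of $F(\gamma^{(k_j)})$ and of $u^{(k_j)}$ on the left and minimality on the right; this shows $\gamma$ is a minimizer. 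Choosing $\tilde\gamma = \gamma$ collapses the chain to an equality, forcing $\mathcal{T}_\alpha(\gamma^{(k_j)};u^{(k_j)}) \to \mathcal{T}_\alpha(\gamma;u)$; since the fidelity parts already converge, so do the penalties, i.e.\ $\norm{\dot\gamma^{(k_j)}}_{L^2} \to \norm{\dot\gamma}_{L^2}$. Together with the strong $L^2$-convergence $\gamma^{(k_j)} \to \gamma$ from the compact embedding, weak convergence plus convergence of norms upgrades to strong convergence in $H^1$.

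Finally, for part (3) the injectivity of $F$ does the decisive work: $F(\gamma) = u = F(\gamma^\dagger)$ has the unique solution $\gamma^\dagger$. Testing the minimizer against $\gamma^\dagger$ yields $\mathcal{T}_{\alpha_k}(\gamma^{(k)};u^{\delta_k}) \le \delta_k^2 + \alpha_k\norm{\dot\gamma^\dagger}_{L^2}^2$, whence the residual $\norm{F(\gamma^{(k)})-u}_{L^2} \to 0$ and $\norm{\dot\gamma^{(k)}}_{L^2}^2 \le \delta_k^2/\alpha_k + \norm{\dot\gamma^\dagger}_{L^2}^2$ stays bounded because $\delta_k^2/\alpha_k \to 0$. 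Coercivity again gives $H^1$-boundedness; any weak subsequential limit $\gamma$ satisfies $F(\gamma) = u$ by compactness, hence $\gamma = \gamma^\dagger$, so the full sequence converges weakly to $\gamma^\dagger$. The same a priori bound gives $\limsup_k \norm{\dot\gamma^{(k)}}_{L^2}^2 \le \norm{\dot\gamma^\dagger}_{L^2}^2$, which with weak lower semicontinuity yields $\norm{\dot\gamma^{(k)}}_{L^2} \to \norm{\dot\gamma^\dagger}_{L^2}$, and strong $H^1$-convergence follows as in part (2). I expect the coercivity step and the weak-to-strong upgrade to be the only points requiring real care; everything else is the standard direct-method argument.
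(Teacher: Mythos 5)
Your proposal is correct and takes essentially the same approach as the paper: the paper's proof simply cites the standard well-posedness/stability/convergence results for nonlinear Tikhonov regularization and notes that the only nonstandard ingredient is coercivity of $\mathcal{T}_\alpha$ despite the homogeneous seminorm penalty, which it obtains from the identity $\norm{F(\gamma)}_{L^2}^2=\norm{\gamma}_{L^1}$. You spell out that standard machinery explicitly and handle coercivity via the same identity; if anything, your version (reverse triangle inequality to bound $\norm{\gamma}_{L^1}$ by the value of the functional, then Poincar\'e--Wirtinger using $\gamma\ge 0$) states the estimate in the direction actually needed, whereas the paper's displayed chain bounds the fidelity term from above.
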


\begin{proof}
  This follows from standard results in the theory of (non-linear)
  Tikhonov regularization, which can be found, for instance,
  in~\cite{EngHanNeu96,SchGraGroHalLen09}.
  The only (ever so slight) complication is the fact that the regularization 
  term uses the homogeneous Sobolev norm, which is
  not coercive on $H^1(S^1)$.
  The coercivity of the functional $\mathcal{T}_\alpha$, however,
  follows immediately from the chain of inequalities
  \[
  \norm{F(\gamma)-u}_{L^2}^2 \le 2\norm{F(\gamma)}_{L^2}^2+2\norm{u}_{L^2}^2
  = 2\norm{\gamma}_{L^1} + 2\norm{u}_{L^2}^2
  \le 2\norm{\gamma}_{L^2}+2\norm{u}_{L^2}^2.
  \]
\end{proof}

\begin{remark}
  Because $F$ is injective, it follows that the equation
  $F(\gamma) = u$ can have at most one solution.
  Still it is possible that the optimisation problem
  $\mathcal{T}_\alpha(\gamma;u^\delta) \to \min$ can have multiple solutions,
  even for $\delta > 0$ arbitrarily small.

  Consider for instance the situation where
  $\gamma^\dagger(t) = 1$ and $F(\gamma^\dagger) = \chi_{S^1\times [0,1]}$.
  Define for $\delta > 0$ with $ \delta^2 < \pi$ the function
  \[
  u^\delta(t,x) :=
  \begin{cases}
    1 & \text{if } 0 \le x < 1-\delta^2/\pi,\\
    \dfrac{1}{2} & \text{if } \abs{x-1} \le \delta^2/\pi,\\
    0 & \text{if } x > 1+\delta^2/\pi.
  \end{cases}
  \]
  Then $\norm{u^\delta-u}_{L^2} = \delta$.

  Now let $\gamma \in H^1_+(S^1)$ and define
  \[
  J^+(\delta) := \set{t \in S^1}{\gamma(t) > 1+\delta^2/\pi},
  \quad\quad
  J^-(\delta) := \set{t \in S^1}{\gamma(t) < 1-\delta^2/\pi}.
  \]
  Then it is easy to see that
  \[
  \norm{F(\gamma)-u^\delta}_{L^2}^2
  = \delta^2 + \int_{J^+(\delta)}\hskip-10pt \gamma(t)-1-\delta^2/\pi\,dt
  + \int_{J^-(\delta)}\hskip-10pt 1-\gamma(t)-\delta^2/\pi\,dt.
  \]
  Therefore $\norm{F(\gamma)-u^\delta}_{L^2}^2$ is minimal
  (with value $\delta^2$) if and only if 
  $\abs{\gamma(t)-1} \le \delta^2/\pi$ for every $t$.
  Thus it follows that
  \[
  \mathcal{T}_\alpha(\gamma;u^\delta) \ge \delta^2
  \]
  for every $\gamma \in H^1_+(S^1)$, and equality holds if
  and only if $\gamma$ is a constant function of the form
  $\gamma(t) = 1+c$ for some $\abs{c} \le \delta^2/\pi$.
  In other words, \emph{every} such function is a minimizer
  of the Tikhonov functional with data $u^\delta$ and
  \emph{any} regularization parameter $\alpha > 0$.

  In particular, this example shows that the stability
  result in Theorem~\ref{th:tikreg} (item (2)) really
  requires the formulation in terms of sub-sequences;
  it can happen that the sequence $\{\gamma^{(k)}\}_{k\in\N}$
  itself does not converge.
  Note, however, that it is also possible to formulate
  the stability result in terms of set convergence.
  Such an approach has for instance been used
  in~\cite{GraHalSch11b}.
\end{remark}

\section{Convergence Rates}

In this main section of the paper we will discuss the derivation
of convergence rates, that is, quantitative estimates for the
difference between the regularized solution $\gamma_\alpha^\delta$
and the true solution $\gamma^\dagger$ in dependence of $\alpha$ and $\delta$.

Throughout this whole section we assume that the equation
$F(\gamma) = u$ has a (necessarily unique) solution $\gamma^\dagger\in H^1_+(S^1)$.
Moreover we denote by $\gamma_\alpha^\delta$ \emph{any} minimizer
of the regularization functional $\mathcal{T}_\alpha(\cdot;u^\delta)$
for \emph{any} noisy data $u^\delta \in L^2(S^1\times\R_{\ge 0})$ satisfying
$\norm{u^\delta-u}_{L^2} \le \delta$.
\medskip

Classically, convergence rates for quadratic Tikhonov regularization
of non-linear operators on Hilbert spaces have been derived
for sufficiently smooth operators under the assumption that
the true solution $\gamma^\dagger$ is contained in the range
of the adjoint of the derivative of the operator $F$ at $\gamma^\dagger$.
In addition it is necessary to compensate for the non-linearity
of the operator $F$ by assuming additional regularity properties
of its derivative $F'$.
One example is the following theorem taken from~\cite{SchGraGroHalLen09}.
Similar results can also be found in~\cite{EngHanNeu96,EngKunNeu89}.

\begin{theorem}
  Assume that $F$ is G\^ateaux differentiable in a neighborhood
  of $\gamma^\dagger$ and that there exist
  $\omega \in L^2(S^1\times\R_{\ge 0})$ and $c > 0$
  with $c\norm{\omega}_{L^2} < 1$
  such that
  \[
  \gamma^\dagger = F'(\gamma^\dagger)^*\omega
  \]
  and
  \[
  \norm[b]{F(\gamma)-F(\gamma^\dagger)-F'(\gamma^\dagger)(\gamma-\gamma^\dagger)}_{L^2}
  \le \frac{c}{2}\norm{\gamma-\gamma^\dagger}^2_{H^1},
  \]
  for every $\gamma\in H^1_+(S^1)$ sufficiently close to $\gamma^\dagger$.
  Then we have for a parameter choice $\alpha\sim \delta$ the rate
  \[
  \norm{\gamma_\alpha^\delta-\gamma^\dagger}_{H^1} = O(\sqrt{\delta}).
  \]
\end{theorem}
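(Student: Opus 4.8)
The plan is to run the classical Engl--Kunisch--Neubauer absorption argument, adapted to the fact that the regularizer here is the \emph{homogeneous} seminorm $\norm{\dot\gamma}_{L^2}^2$ rather than the full $H^1$ norm. Write $e := \gamma_\alpha^\delta - \gamma^\dagger$. First I would exploit minimality of $\gamma_\alpha^\delta$: comparing $\mathcal{T}_\alpha(\gamma_\alpha^\delta;u^\delta)$ with $\mathcal{T}_\alpha(\gamma^\dagger;u^\delta)$, bounding $\norm{F(\gamma^\dagger)-u^\delta}_{L^2}=\norm{u-u^\delta}_{L^2}\le\delta$, and expanding the quadratic term via the polarization identity $\norm{\dot\gamma_\alpha^\delta}_{L^2}^2=\norm{\dot\gamma^\dagger}_{L^2}^2+2\inner{\dot\gamma^\dagger}{\dot e}_{L^2}+\norm{\dot e}_{L^2}^2$ to reach
\[
\norm{F(\gamma_\alpha^\delta)-u^\delta}_{L^2}^2+\alpha\norm{\dot e}_{L^2}^2\le\delta^2-2\alpha\inner{\dot\gamma^\dagger}{\dot e}_{L^2}.
\]

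The next step is to dispose of the cross term using the two structural hypotheses. The source condition $\gamma^\dagger=F'(\gamma^\dagger)^*\omega$ converts the semi-inner-product attached to the regularizer into an $L^2$ pairing, $\inner{\dot\gamma^\dagger}{\dot e}_{L^2}=\inner{\omega}{F'(\gamma^\dagger)e}_{L^2}$, and the tangential-cone estimate replaces the linearization by the genuine residual: writing $F'(\gamma^\dagger)e=(F(\gamma_\alpha^\delta)-u)-r$ with $\norm{r}_{L^2}\le\tfrac{c}{2}\norm{e}_{H^1}^2$ and using $\norm{u-u^\delta}_{L^2}\le\delta$, Cauchy--Schwarz yields $-2\alpha\inner{\dot\gamma^\dagger}{\dot e}\le 2\alpha\norm{\omega}_{L^2}\bigl(\norm{F(\gamma_\alpha^\delta)-u^\delta}_{L^2}+\delta\bigr)+\alpha c\norm{\omega}_{L^2}\norm{e}_{H^1}^2$. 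A single Young inequality then absorbs the residual product into the discrepancy on the left, and after discarding $\norm{F(\gamma_\alpha^\delta)-u^\delta}_{L^2}^2\ge 0$ I am left with a master inequality of the shape $\alpha\norm{\dot e}_{L^2}^2\le \delta^2+\alpha^2\norm{\omega}_{L^2}^2+2\alpha\delta\norm{\omega}_{L^2}+\alpha c\norm{\omega}_{L^2}\norm{e}_{H^1}^2$.

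The decisive point, and the one I expect to be the main obstacle, is the absorption of the quadratic remainder $\alpha c\norm{\omega}_{L^2}\norm{e}_{H^1}^2$: the left-hand side controls only the seminorm $\norm{\dot e}_{L^2}^2$, whereas the remainder carries the full $H^1$ norm, which the regularizer does not see on constants. Splitting $\norm{e}_{H^1}^2=\norm{\dot e}_{L^2}^2+\norm{e}_{L^2}^2$, the seminorm part is what the hypothesis $c\norm{\omega}_{L^2}<1$ is there to absorb; the $L^2$ part I would handle in two pieces, the oscillation via the Poincar\'e--Wirtinger inequality $\norm{e-\bar e}_{L^2}\le C\norm{\dot e}_{L^2}$ on $S^1$ (feeding back into the seminorm), and the mean $\bar e$ via the special identity $\norm{F(\gamma_1)-F(\gamma_2)}_{L^2}^2=\norm{\gamma_1-\gamma_2}_{L^1}$ established in Lemma~\ref{le:hoelder}, which gives $2\pi\abs{\bar e}\le\norm{e}_{L^1}=\norm{F(\gamma_\alpha^\delta)-u}_{L^2}^2$ and hence a mean that is of the order of the squared discrepancy, i.e.\ strictly higher order. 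Keeping this reconciliation compatible with the absorption — so that only $c\norm{\omega}_{L^2}<1$ (and the smallness of $\alpha$) is needed — is the delicate part; everything else is the textbook computation. Finally I would choose $\alpha\sim\delta$, balancing the three driving terms $\delta^2$, $\alpha^2\norm{\omega}_{L^2}^2$ and $\alpha\delta\norm{\omega}_{L^2}$ at order $\delta^2$, to obtain $\norm{\dot e}_{L^2}^2=O(\delta)$ and then, through the two norm-equivalence steps, $\norm{e}_{H^1}^2=O(\delta)$. Throughout I would invoke the qualitative convergence $\gamma_\alpha^\delta\to\gamma^\dagger$ from Theorem~\ref{th:tikreg}(3) to ensure that $\gamma_\alpha^\delta$ eventually lies in the neighborhood where the tangential-cone estimate is assumed to hold.
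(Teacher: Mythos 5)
The first thing to note is that the paper does not prove this theorem at all: it is quoted from \cite{SchGraGroHalLen09} as background material, purely to set up the contrast with Lemma~\ref{le:nondiff} and motivate the source-inequality approach. Your attempt must therefore be judged against the standard Engl--Kunisch--Neubauer argument \cite{EngKunNeu89,EngHanNeu96}, and your skeleton (minimality comparison, polarization, source condition plus tangential-cone remainder, Young's inequality, absorption, $\alpha\sim\delta$) is exactly that argument. In those sources, however, the regularizer is the \emph{full} norm, so polarization puts $\alpha\norm{e}_{H^1}^2$ on the left and the absorption closes under precisely $c\norm{\omega}_{L^2}<1$.

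The genuine gap is the step you yourself label ``the delicate part'', and as proposed it does not close under the stated hypothesis. With the paper's homogeneous regularizer the left-hand side controls only $\alpha\norm{\dot e}_{L^2}^2$, and your Poincar\'e--Wirtinger recycling of the oscillation costs a factor: on $S^1$ the sharp Wirtinger constant is $1$, so the best available estimate is $\norm{e}_{H^1}^2\le 2\norm{\dot e}_{L^2}^2+2\pi\bar e^2$ (with $\bar e$ the mean of $e$), and absorbing $\alpha c\norm{\omega}_{L^2}\norm{e}_{H^1}^2$ into $\alpha\norm{\dot e}_{L^2}^2$ then requires $2c\norm{\omega}_{L^2}<1$; the factor $2$ cannot be beaten along this route (test $e$ close to a first harmonic, for which $\norm{e}_{L^2}=\norm{\dot e}_{L^2}$). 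A repair that stays inside your toolbox: do \emph{not} discard the discrepancy term, and estimate the whole $L^2$ contribution at once by H\"older and the Sobolev embedding,
\[
\norm{e}_{L^2}^2\le\norm{e}_{L^1}\norm{e}_{L^\infty}
\le C\norm{e}_{H^1}\norm[b]{F(\gamma_\alpha^\delta)-F(\gamma^\dagger)}_{L^2}^2,
\]
using the identity of Lemma~\ref{le:hoelder} and the a priori boundedness of $\norm{e}_{H^1}$ from Theorem~\ref{th:tikreg}. Then the entire $L^2$ part of the remainder is bounded by a small multiple of $\norm{F(\gamma_\alpha^\delta)-u^\delta}_{L^2}^2+\delta^2$ once $\alpha$ is small, and is swallowed by the retained discrepancy; only the seminorm part $\alpha c\norm{\omega}_{L^2}\norm{\dot e}_{L^2}^2$ remains to be absorbed, for which $c\norm{\omega}_{L^2}<1$ suffices.

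Two further caveats. Your identity $\inner{\dot\gamma^\dagger}{\dot e}_{L^2}=\inner{\omega}{F'(\gamma^\dagger)e}_{L^2}$ silently interprets the adjoint in the source condition with respect to the degenerate homogeneous semi-inner product; under the literal $H^1$-adjoint reading one picks up an extra term $-\inner{\gamma^\dagger}{e}_{L^2}$, which is first order in $e$ and would destroy the rate in this argument, so the interpretation must be stated. Finally, by Lemma~\ref{le:nondiff} the hypotheses of this theorem can never hold for the paper's specific operator $F$, so the statement is really a generic template; a proof invoking the special identity of Lemma~\ref{le:hoelder} is logically admissible but ties the argument to an operator that cannot satisfy the assumptions, whereas the textbook proof needs no such structure because the full-norm regularizer never opens the seminorm gap in the first place.
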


The crucial assumption in this theorem is the
\emph{source condition} $\gamma^\dagger \in F'(\gamma^\dagger)^*\omega$,
which links the non-linear problem to the much better understood
linear theory. The additional assumptions mainly guarantee
that the gap between the non-linear and the linearized problem
is sufficiently small.
Still, the whole approach for the derivation of convergence
rates relies on a linearization of the operator $F$,
which in our situation is not easily possible,
as the next result shows.

\begin{lemma}\label{le:nondiff}
  The mapping $F\colon H^1_+(S^1)\to L^2(S^1\times\R_{\ge 0})$
  is nowhere differentiable in the interior of its domain.
\end{lemma}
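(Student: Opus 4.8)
The plan is to show that for any $\gamma \in H^1_+(S^1)$, the candidate derivative $F'(\gamma)$ cannot exist as a bounded linear operator, by examining the behavior of the difference quotients along carefully chosen directions. Recall from the proof of Lemma~\ref{le:hoelder} that for $\gamma, \gamma+h \in L^2_+(S^1)$ we have the exact identity
\[
\norm{F(\gamma+h)-F(\gamma)}_{L^2}^2 = \norm{h}_{L^1},
\]
since the symmetric difference of the two hypographs, at each $t$, is an interval of length $\abs{h(t)}$. This square-root relationship between the perturbation of the input and the output is the root cause of non-differentiability: if $F$ were differentiable at $\gamma$ with derivative $F'(\gamma)$, then along the direction $h$ we would need $\norm{F(\gamma+sh)-F(\gamma)}_{L^2} = \norm{F'(\gamma)(sh)}_{L^2} + o(s)$ as $s \to 0^+$, forcing the left-hand side to be asymptotically linear in $s$. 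But the identity gives $\norm{F(\gamma+sh)-F(\gamma)}_{L^2} = \sqrt{s}\,\norm{h}_{L^1}^{1/2}$ (for $h \ge 0$, or more generally once $s$ is small enough that sign cancellations are controlled), which grows like $\sqrt{s}$, not like $s$.

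First I would fix an arbitrary $\gamma \in H^1_+(S^1)$ and pick a nonzero direction $h \in H^1(S^1)$ that is admissible, meaning $\gamma + sh \in H^1_+(S^1)$ for all small $s > 0$; for instance one may take any $h \ge 0$ with $h \not\equiv 0$, which keeps the argument in the nonnegative cone. Then I would compute $\norm{F(\gamma+sh)-F(\gamma)}_{L^2} = \sqrt{s \norm{h}_{L^1}}$ exactly. If $F$ were Gâteaux differentiable at $\gamma$, the directional derivative $\lim_{s\to 0^+} s^{-1}\bigl(F(\gamma+sh)-F(\gamma)\bigr)$ would exist in $L^2$ and equal $F'(\gamma)h$; in particular $s^{-1}\norm{F(\gamma+sh)-F(\gamma)}_{L^2} = s^{-1/2}\norm{h}_{L^1}^{1/2}$ would converge. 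But this blows up as $s \to 0^+$ whenever $h \not\equiv 0$, so no finite directional derivative can exist. Hence $F$ is not differentiable at $\gamma$, and since $\gamma$ was arbitrary in the interior of the domain, $F$ is nowhere differentiable.

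The one point requiring a little care is the notion of ``interior of the domain'' and which directions $h$ are genuinely admissible: since $H^1_+(S^1)$ is a cone rather than an open set, I should clarify that I am treating differentiability in the sense of the existence of all one-sided directional derivatives (or Gâteaux differentiability relative to the cone), and that it suffices to exhibit a single admissible direction along which the difference quotient diverges. Choosing $h \ge 0$ guarantees admissibility at every $\gamma$ without any interiority hypothesis, so the argument in fact applies at every point of the domain, not merely the interior. I expect the main obstacle to be purely expository rather than mathematical: stating precisely enough what ``differentiable'' means for a map defined on a convex cone so that the $\sqrt{s}$ blow-up of the difference quotient cleanly contradicts it. Once that convention is pinned down, the contradiction is immediate from the exact norm identity, and no estimation beyond the elementary computation of the symmetric-difference area is needed.
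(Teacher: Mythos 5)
Your proposal is correct and follows essentially the same route as the paper's proof: both rest on the exact identity $\norm[b]{F(\gamma+s\sigma)-F(\gamma)}_{L^2}^2 = \norm{s\sigma}_{L^1}$ from Lemma~\ref{le:hoelder}, from which the difference quotient's norm behaves like $s^{-1/2}$ and hence diverges, ruling out even one-sided directional derivatives. The only cosmetic difference is how admissibility of directions is handled: the paper uses the interiority hypothesis ($\gamma > 0$ everywhere) to allow arbitrary nonzero directions $\sigma \in H^1(S^1)$, while you restrict to $h \ge 0$, which works at every point of the cone and thus slightly strengthens the conclusion at no cost.
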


\begin{proof}
  First note that the interior of $H^1_+(S^1)$ consists
  of all functions $\gamma \in H^1(S^1)$ with $\gamma(t) > 0$
  for every $t \in S^1$.

  Let therefore $\gamma \in H^1_+(S^1)$ satisfy $\gamma(t) > 0$ everywhere,
  and let $\sigma \in H^1(S^1)$. Then, as shown in the proof of Lemma~\ref{le:hoelder},
  we have
  \[
  \norm{F(\gamma+s\sigma)-F(\gamma)}_{L^2}^2 = \int_{S^1}\abs{s\sigma(t)}\,dt.
  \]
  Now assume that $\sigma \neq 0$.
  Then there exist a non-empty open subset $J \subset S^1$ and $\eps > 0$
  such that $\abs{\sigma(t)} > \eps$ for every $t \in J$.
  Consequently
  \[
  \norm[b]{F(\gamma+s\sigma)-F(\gamma)}_{L^2}^2
  = \abs{s}\int_{S^1} \abs{\sigma(t)}\,dt \ge \eps \abs{s}\abs{J}.
  \]
  Thus
  \[
  \lim_{s \to 0^+} \frac{1}{s}\norm[b]{F(\gamma+s\sigma)-F(\gamma)}_{L^2}
  \ge \lim_{s\to 0^+} \sqrt{\frac{\eps\abs{J}}{s}} = +\infty,
  \]
  which proves that $F$ does not even have a one-sided directional
  derivative in any non-trivial direction.
\end{proof}

For the next result---the main theorem of this paper---recall
the definition of the fractional order Sobolev spaces
$W^{s,q}$, $s \in \R$, $1 \le q \le \infty$,
and the homogeneous Sobolev spaces $W_0^{s,q}$,
which can for instance be found in~\cite{BerLoe76}.

\begin{theorem}\label{th:main1}
  Assume that $\gamma^\dagger \in W^{s,q}(S^1)$ with $1 < s \le 2$
  and $2 \le q < +\infty$.
  Then there exist constants $c_1$, $c_2$, $c_3 > 0$ such that
  \begin{multline*}
  c_1 \norm{\gamma-\gamma^\dagger}_{H^1_0}^2\\
  \le \norm{\gamma}_{H^1_0}^2 - \norm{\gamma^\dagger}_{H^1_0}^2
  + c_2 \norm{F(\gamma)-F(\gamma^\dagger)}_{L^2}^{4-\frac{4}{s}}
  + c_3 \norm{F(\gamma)-F(\gamma^\dagger)}_{L^2}^{2-\frac{2q}{qs-s+1}}
  \end{multline*}
  for every $\gamma \in H^1_+(S^1)$.

  If $\gamma^\dagger \in W^{s,\infty}(S^1)$ with $1 < s \le 2$, then
  \begin{equation}\label{eq:ineqinfty}
  c_1 \norm{\gamma-\gamma^\dagger}_{H^1_0}^2 
  \le \norm{\gamma}_{H^1_0}^2 - \norm{\gamma^\dagger}_{H^1_0}^2
  + c_2 \norm{F(\gamma)-F(\gamma^\dagger)}_{L^2}^{4-\frac{4}{s}}.
  \end{equation}
\end{theorem}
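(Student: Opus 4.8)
The plan is to turn the desired estimate into a statement purely about the two norms that the Tikhonov functional controls. Writing $v := \gamma-\gamma^\dagger$ and expanding the homogeneous Sobolev norm, one has the identity
\[
\norm{\gamma}_{H^1_0}^2 - \norm{\gamma^\dagger}_{H^1_0}^2 = \norm{v}_{H^1_0}^2 + 2\inner{\gamma^\dagger}{v}_{H^1_0},
\]
so, after moving $\norm{v}_{H^1_0}^2$ to the left, the claimed inequality becomes an upper bound
\[
-2\inner{\gamma^\dagger}{v}_{H^1_0} \le (1-c_1)\norm{v}_{H^1_0}^2 + (\text{powers of } \norm{F(\gamma)-F(\gamma^\dagger)}_{L^2}).
\]
Here I would invoke the identity $\norm{F(\gamma)-F(\gamma^\dagger)}_{L^2}^2 = \norm{v}_{L^1}$ established in the proof of Lemma~\ref{le:hoelder}: the data term controls exactly $\norm{v}_{L^1}$, while the regularizer controls $\norm{v}_{H^1_0} = \norm{\dot v}_{L^2}$. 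Everything thus reduces to an interpolation inequality bounding the single cross term $\inner{\gamma^\dagger}{v}_{H^1_0} = \int_{S^1}\dot\gamma^\dagger\,\dot v\,dt$ in terms of $\norm{v}_{L^1}$ and $\norm{\dot v}_{L^2}$, with the strength of the bound governed by the a priori smoothness $\gamma^\dagger\in W^{s,q}$.

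The core estimate I would obtain by mollification at a scale $\tau>0$. Splitting $\dot\gamma^\dagger = \rho_\tau*\dot\gamma^\dagger + (\dot\gamma^\dagger-\rho_\tau*\dot\gamma^\dagger)$, I would integrate by parts in the smooth part to move the derivative onto the mollifier and pair the result with $v$, while pairing the rough remainder directly with $\dot v$:
\[
\int_{S^1}\dot\gamma^\dagger\,\dot v\,dt = -\int_{S^1}(\dot\rho_\tau*\dot\gamma^\dagger)\,v\,dt + \int_{S^1}(\dot\gamma^\dagger-\rho_\tau*\dot\gamma^\dagger)\,\dot v\,dt.
\]
Since $\gamma^\dagger\in W^{s,q}$ means $\dot\gamma^\dagger\in W^{s-1,q}$ with $0<s-1\le 1$, the standard mollification estimates give $\norm{\dot\rho_\tau*\dot\gamma^\dagger}\lesssim\tau^{s-2}$ and $\norm{\dot\gamma^\dagger-\rho_\tau*\dot\gamma^\dagger}\lesssim\tau^{s-1}$ (the first using $\int\dot\rho_\tau=0$, so that only the modulus of continuity of $\dot\gamma^\dagger$ enters). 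Hölder's inequality then yields $\abs{\int\dot\gamma^\dagger\dot v}\lesssim\tau^{s-2}\norm{v}_{*}+\tau^{s-1}\norm{\dot v}_{L^2}$; optimizing over $\tau$ produces a product $\norm{v}_{*}^{a}\norm{\dot v}_{L^2}^{b}$, and a final application of Young's inequality peels off a term $\eps\norm{\dot v}_{L^2}^2$ (absorbed on the left through the factor $1-c_1$) together with a pure power of $\norm{v}_{L^1}=\norm{F(\gamma)-F(\gamma^\dagger)}_{L^2}^2$.

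The precise exponents, and the difference between the two assertions, come from the norm $\norm{v}_{*}$ in which the mollified-derivative term is measured. In the $W^{s,\infty}$ case $\dot\gamma^\dagger$ is genuinely H\"older continuous of order $s-1$, so $\dot\rho_\tau*\dot\gamma^\dagger$ is bounded in $L^\infty$ and may be paired with $v$ in $L^1$ directly; balancing $\tau^{s-2}\norm{v}_{L^1}$ against $\tau^{s-1}\norm{\dot v}_{L^2}$ and applying Young (with exponents $2/s$ and $2/(2-s)$) gives the single term $\norm{F(\gamma)-F(\gamma^\dagger)}_{L^2}^{4-4/s}$. For finite $q$ there is no pointwise modulus of continuity, only the integrated one, so I would instead pair in $L^q$--$L^{q'}$ using $\norm{\dot\rho_\tau*\dot\gamma^\dagger}_{L^q}\lesssim\tau^{s-2}$ and control $\norm{v}_{L^{q'}}$ by interpolating between $\norm{v}_{L^1}$ and $\norm{\dot v}_{L^2}$ on $S^1$ (legitimately, since $q'\le 2$ gives $\norm{\dot v}_{L^{q'}}\le C\norm{\dot v}_{L^2}$). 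The $L^1$-part of this interpolation reproduces the exponent $4-4/s$, while the $\norm{\dot v}_{L^2}$-part, after the $\tau$-optimization and the Young step, yields the second term with exponent $2-\tfrac{2q}{qs-s+1}$; as $q\to\infty$ this contribution degenerates and one recovers the single-term estimate.

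The main obstacle is precisely that $F$ cannot be linearized (Lemma~\ref{le:nondiff}), so there is no source condition to exploit and the whole cross term $\inner{\gamma^\dagger}{v}_{H^1_0}$ must be controlled by interpolation alone. The genuinely delicate point is the finite-$q$ regime: because $W^{s-1,q}$ with small $s-1$ need not embed into any space of continuous functions, the clean $L^\infty$ bound on the mollified derivative is unavailable, and one must track the scaling through integrated moduli of continuity and the sharp interpolation exponents on $S^1$ to land exactly on the power $2-\tfrac{2q}{qs-s+1}$. Care is also needed because the regularizer uses the \emph{homogeneous} norm $\norm{\dot v}_{L^2}$, which does not control the mean of $v$; this is harmless for the cross term, which only sees $\dot\gamma^\dagger$, but it must be kept in mind when invoking the interpolation inequalities.
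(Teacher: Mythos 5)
Your proposal follows the same structural strategy as the paper's proof: expand the homogeneous norm so that everything reduces to bounding the cross term $2\inner{\dot{\gamma}^\dagger}{\dot{\sigma}}$ with $\sigma = \gamma-\gamma^\dagger$, use the identity $\norm{F(\gamma)-F(\gamma^\dagger)}_{L^2}^2 = \norm{\sigma}_{L^1}$, interpolate between the $L^1$-norm (controlled by the data term) and $\norm{\dot{\sigma}}_{L^2}$ (absorbed on the left), and finish with Young's inequality. Where you genuinely differ is the implementation of the key estimate: the paper invokes the duality bound $\abs[b]{\inner{\dot{\gamma}^\dagger}{\dot{\sigma}}} \le \norm{\gamma^\dagger}_{W^{s,q}_0}\norm{\sigma}_{W^{2-s,q_*}_0}$ together with the interpolation inequality $\norm{\sigma}_{W^{2-s,q_*}_0} \le C_1\norm{\sigma}_{L^{q_*}}^{s-1}\norm{\sigma}_{H^1_0}^{2-s}$, citing Bergh--L\"ofstr\"om, whereas you prove the equivalent bound by hand via mollification at scale $\tau$, integration by parts onto the mollifier, the moduli-of-continuity estimates $\norm{\dot{\rho}_\tau*\dot{\gamma}^\dagger}_{L^q}\le C\tau^{s-2}$ and $\norm{\dot{\gamma}^\dagger-\rho_\tau*\dot{\gamma}^\dagger}_{L^q}\le C\tau^{s-1}$, and optimization over $\tau$. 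This is legitimate (it is essentially the modulus-of-smoothness proof of the interpolation inequality the paper cites), and both routes land on the same two product terms $\norm{\sigma}_{L^1}^{s-1}\norm{\sigma}_{H^1_0}^{2-s}$ and $\norm{\sigma}_{L^1}^{\frac{s-1}{q_*}}\norm{\sigma}_{H^1_0}^{2-s+\frac{s-1}{q}}$; your handling of $\norm{\sigma}_{L^{q_*}}$ (interpolation against $L^1$ and $L^\infty$, with the Sobolev embedding on $S^1$ compensating for the missing mean) is exactly the paper's. Your version is more elementary and self-contained; the paper's is shorter because it outsources the analysis to standard interpolation-space results.

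One bookkeeping caveat, which you share with the paper itself. You assert that the Young step yields the second term with exponent $2-\frac{2q}{qs-s+1}$ on $\norm{F(\gamma)-F(\gamma^\dagger)}_{L^2}$. Carrying Young's inequality out on the product $\norm{\sigma}_{L^1}^{\frac{s-1}{q_*}}\norm{\sigma}_{H^1_0}^{2-s+\frac{s-1}{q}}$, the conjugate exponents are forced to be $p = \frac{2q}{qs-s+1}$ and $p_* = \frac{2q}{2q-qs+s-1}$, so the $L^1$-factor acquires the power $\frac{s-1}{q_*}\cdot\frac{2q}{qs-s+1} = \frac{2(s-1)(q-1)}{qs-s+1} = 2-\frac{2q}{qs-s+1}$ as a power of $\norm{\sigma}_{L^1}$, hence the power $4-\frac{4q}{qs-s+1}$ of the residual norm $\norm{F(\gamma)-F(\gamma^\dagger)}_{L^2}$ --- twice what you (and the theorem) state. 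The paper's own proof contains the same factor-of-two slip: its Young display records $\norm{\sigma}_{L^1}^{1-\frac{q}{qs-s+1}}$ where the computation gives $\norm{\sigma}_{L^1}^{2-\frac{2q}{qs-s+1}}$. This does not undermine the substance of your argument --- for small residuals the honestly derived exponent is stronger, and the stated form follows whenever $\norm{\sigma}_{L^1}\le 1$ --- but if you carry out your sketch you should state the conclusion with the exponent your computation actually produces, or restrict to bounded residuals when passing to the weaker exponent.
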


\begin{proof}
  We only show the assertion for the more complicated case $q < \infty$.

  First note that
  \[
  \norm{\dot{\gamma}-\dot{\gamma}^\dagger}^2
  = \norm{\dot{\gamma}}^2 - \norm{\dot{\gamma}^\dagger}^2
  - 2\inner{\dot{\gamma}^\dagger}{\dot{\gamma}-\dot{\gamma}^\dagger}.
  \]
  In order to estimate the last term on the right hand side of
  this equation, we will use the abbreviation
  \[
  \sigma := \gamma-\gamma^\dagger.
  \]
  Because $\gamma^\dagger \in W^{s,q}_+(S^1)$, it follows that
  \[
  \abs[b]{\inner{\dot{\gamma}^\dagger}{\dot{\sigma}}}
  \le \norm{\gamma^\dagger}_{W^{s,q}_0}\norm{\sigma}_{W^{2-s,q_*}_0}
  \]

  Moreover the interpolation inequality for Sobolev functions implies that
  there exists some $C_1 > 0$ only depending on $s$ and $q_*$ such that
  \[
  \norm{\sigma}_{W^{2-s,q_*}_0} 
  \le C_1\norm{\sigma}_{L^{q_*}}^{s-1}\norm{\sigma}_{W^{1,q_*}_0}^{2-s}
  \le C_1\norm{\sigma}_{L^{q_*}}^{s-1}\norm{\sigma}_{H^1_0}^{2-s}.
  \]
  The last inequality holds, because $q \ge 2$ and therefore $q_* \le 2$.
  Now note that
  \[
  \begin{aligned}
  \norm{\sigma}_{L^{q_*}} & \le \norm{\sigma}_{L^1}^{\frac{1}{q_*}}\norm{\sigma}_{L^\infty}^{\frac{1}{q}}\\
  &\le \norm{\sigma}_{L^1}^{\frac{1}{q_*}} \bigl(\norm{\sigma}_{L^1}+\norm{\sigma}_{W^{1,1}_0}\bigr)^{\frac{1}{q}}\\
  &\le \norm{\sigma}_{L^1}+\norm{\sigma}_{L^1}^{\frac{1}{q_*}}\norm{\sigma}_{H^1_0}^{\frac{1}{q}}.
  \end{aligned}
  \]
  Setting $C_2 := 2C_1\norm{\gamma^\dagger}_{W^{s,q}_0}$,
  we thus obtain the estimate
  \[
  2\abs[b]{\inner{\dot{\gamma}^\dagger}{\dot{\sigma}}}
  \le C_2\norm{\sigma}_{L^1}^{s-1}\norm{\sigma}_{H^1_0}^{2-s}
  + C_2\norm{\sigma}_{L^1}^{\frac{s-1}{q_*}}\norm{\sigma}_{H^1_0}^{2-s+\frac{s-1}{q}}.
  \]
  Now we use Young's inequality $ab \le \frac{1}{p}a^p + \frac{1}{p_*}b^{p_*}$,
  which implies with $p=2/s$ and $p_* = 2/(2-s)$ that
  \begin{align*}
    C_2\norm{\sigma}_{L^1}^{s-1}\norm{\sigma}_{H^1_0}^{2-s}
    &\le \frac{s}{2}C_2^{\frac{2}{s}}\norm{\sigma}_{L^1}^{\frac{2s-2}{s}}+\Bigl(1-\frac{s}{2}\Bigr)\norm{\sigma}_{H^1_0}^2\\
    \intertext{and with $p = \frac{2q}{qs-s+1}$ and $p_* = \frac{2q}{2q-qs+s-1}$}
    C_2\norm{\sigma}_{L^1}^{\frac{s-1}{q_*}}\norm{\sigma}_{H^1_0}^{2-s+\frac{s-1}{q}}
    &\le \frac{qs-s+1}{2q}C_2^{\frac{2q}{qs-s+1}}\norm{\sigma}_{L^1}^{1-\frac{q}{qs-s+1}}\\
    &\qquad\qquad
    + \Bigl(1-\frac{qs-s+1}{2q}\Bigr)\norm{\sigma}_{H^1_0}^2.
  \end{align*}
  Thus we obtain with the constants
  \[
  \begin{aligned}
    c_1 &= \frac{(2q-1)(s-1)}{2q},\\
    c_2 &= \frac{s}{2}C_2^{\frac{2}{s}},\\
    c_3 &= \frac{qs-s+1}{2q}C_2^{\frac{2q}{qs-s+1}},\\
  \end{aligned}
  \]
  the estimate
  \[
  c_1\norm{\gamma-\gamma^\dagger}_{H^1_0}^2 \le \norm{\gamma}_{H^1_0}^2-\norm{\gamma^\dagger}_{H^1_0}^2
  + c_2 \norm{\gamma-\gamma^\dagger}_{L^1}^{2-\frac{2}{s}}
  + c_3 \norm{\gamma-\gamma^\dagger}_{L^1}^{1-\frac{q}{qs-s+1}}.
  \]
  Now the assertion follows from the fact that
  $\norm{F(\gamma)-F(\gamma^\dagger)}_{L^2}^2 = \norm{\gamma-\gamma^\dagger}_{L^1}$.
\end{proof}

\begin{corollary}
  Let the conditions of Theorem~\ref{th:main1} be satisfied
  and let
  \[
  \Phi(t) := c_2 t^{2\frac{s-1}{s}} + c_3 t^{\frac{s-1}{q_*}}
  \]
  in case $q < \infty$ and
  \[
  \Phi(t) := c_2 t^{2\frac{s-1}{s}}
  \]
  in case $q = \infty$.

  \begin{enumerate}
  \item If $q < \infty$ and $s < 2$ let $\Psi$ be the convex
    conjugate of the strictly convex mapping $t \mapsto \Phi^{-1}(2t)$.
    Then
    \[
    c_1 \norm{\gamma_\alpha^\delta-\gamma^\dagger}_{H^1_0}^2
    \le \frac{\delta^2}{\alpha}+\Phi(2\delta^2) + \frac{\Psi(\alpha)}{\alpha}.
    \]
  \item If $q = \infty$ and $s = 2$ we have
    \[
    c_1 \norm{\gamma_\alpha^\delta-\gamma^\dagger}_{H^1_0}^2 \le \frac{\delta^2}{\alpha} + 2c_2\delta^2
    \]
    whenever $\alpha \le 1/(2c_2)$.
  \end{enumerate}
\end{corollary}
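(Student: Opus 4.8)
The plan is to feed the minimizing property of $\gamma_\alpha^\delta$ into the variational inequality of Theorem~\ref{th:main1} and then to convert the residual-dependent remainder into the conjugate term $\Psi(\alpha)/\alpha$ by a Fenchel--Young argument. First I would use that $\gamma^\dagger$ is an admissible competitor, so that minimality of $\gamma_\alpha^\delta$ together with $F(\gamma^\dagger)=u$ and $\norm{u^\delta-u}_{L^2}\le\delta$ gives, with the abbreviation $\rho:=\norm{F(\gamma_\alpha^\delta)-u^\delta}_{L^2}$,
\[
\norm{\gamma_\alpha^\delta}_{H^1_0}^2-\norm{\gamma^\dagger}_{H^1_0}^2 \le \frac{\delta^2-\rho^2}{\alpha}.
\]
Inserting this into Theorem~\ref{th:main1} applied to $\gamma=\gamma_\alpha^\delta$, and noting that by the definition of $\Phi$ the two power terms on its right-hand side are exactly $\Phi(\norm{F(\gamma_\alpha^\delta)-F(\gamma^\dagger)}_{L^2}^2)$, yields the master estimate
\[
c_1\norm{\gamma_\alpha^\delta-\gamma^\dagger}_{H^1_0}^2 \le \frac{\delta^2}{\alpha}-\frac{\rho^2}{\alpha} + \Phi\bigl(\norm{F(\gamma_\alpha^\delta)-F(\gamma^\dagger)}_{L^2}^2\bigr).
\]

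Next I would bridge the true residual and the data residual by the triangle inequality, which gives $\norm{F(\gamma_\alpha^\delta)-F(\gamma^\dagger)}_{L^2}^2\le 2\rho^2+2\delta^2$. For the admissible parameters $1<s<2$ and $2\le q<\infty$ both exponents defining $\Phi$ lie strictly between $0$ and $1$, so $\Phi$ is strictly concave, increasing, and vanishes at $0$; in particular it is subadditive, and monotonicity together with subadditivity give $\Phi(\norm{F(\gamma_\alpha^\delta)-F(\gamma^\dagger)}_{L^2}^2)\le\Phi(2\rho^2)+\Phi(2\delta^2)$. After this split the $\Phi(2\delta^2)$ term is already in its final form, and all remaining dependence on $\rho$ is concentrated in $\Phi(2\rho^2)-\rho^2/\alpha$.

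The heart of item~(1) is then to bound $\sup_{\rho\ge0}\bigl[\Phi(2\rho^2)-\rho^2/\alpha\bigr]$ by $\Psi(\alpha)/\alpha$. Because $\Phi$ is strictly concave and invertible on $[0,\infty)$, the map $t\mapsto\Phi^{-1}(2t)$ is strictly convex, so its convex conjugate $\Psi$ is well defined and the Fenchel--Young inequality is available; substituting $w=\rho^2$ and passing to the conjugate variable turns the supremum into a value of $\Psi$, which upon reinsertion into the master estimate produces the claimed bound $c_1\norm{\gamma_\alpha^\delta-\gamma^\dagger}_{H^1_0}^2\le\delta^2/\alpha+\Phi(2\delta^2)+\Psi(\alpha)/\alpha$. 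I expect this conjugacy step to be the main obstacle: one must keep careful track of the multiplicative constants (the various factors of two generated by the triangle inequality, by the subadditivity split, and by the ``$2t$'' inside the conjugate) and verify that the supremum is genuinely finite and attained, which is precisely where strict concavity of $\Phi$ and the restriction $s<2$ enter.

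Finally, item~(2) is the degenerate linear case $s=2$, $q=\infty$, in which $\Phi(t)=c_2 t$. Here the subadditivity split is an exact identity and the residual remainder collapses to $\rho^2(2c_2-1/\alpha)$; the hypothesis $\alpha\le 1/(2c_2)$ makes its coefficient nonpositive, so it may simply be dropped, leaving $c_1\norm{\gamma_\alpha^\delta-\gamma^\dagger}_{H^1_0}^2\le\delta^2/\alpha+2c_2\delta^2$. This is the same computation as item~(1) specialized to a linear index function, for which the conjugate $\Psi$ degenerates and the supremum equals zero exactly under the stated smallness condition on $\alpha$.
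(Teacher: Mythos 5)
Your proposal takes a genuinely different route from the paper's: the paper disposes of this corollary in one line by citing \cite[Theorem~3.1]{Gra10b}, whereas you attempt to reprove that cited result inline, from the minimality of $\gamma_\alpha^\delta$, the variational inequality of Theorem~\ref{th:main1}, the triangle inequality, and Fenchel--Young duality. The skeleton is the standard one, your first two steps are correct, and your treatment of item~(2) is complete and correct: there $\Phi(t)=c_2t$, the splitting is exact, and the residual term $\rho^2(2c_2-1/\alpha)$ is nonpositive precisely when $\alpha\le 1/(2c_2)$.

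Item~(1), however, has a genuine gap at exactly the step you flagged as the main obstacle and then asserted to close. You need $\Phi(2\rho^2)-\rho^2/\alpha\le\Psi(\alpha)/\alpha$ for \emph{all} $\rho\ge0$, where $\Psi$ is the conjugate of $g(t):=\Phi^{-1}(2t)$. This supremum can be computed exactly: substituting $w=2\rho^2$ and then $t=\Phi(w)$,
\[
\sup_{\rho\ge0}\Bigl[\Phi(2\rho^2)-\frac{\rho^2}{\alpha}\Bigr]
=\sup_{w\ge0}\Bigl[\Phi(w)-\frac{w}{2\alpha}\Bigr]
=\frac{1}{\alpha}\sup_{t\ge0}\Bigl[\alpha t-\tfrac{1}{2}\Phi^{-1}(t)\Bigr]
=\frac{\Psi(4\alpha)}{2\alpha},
\]
the last equality following by rescaling the variable in the definition of $\Psi$. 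Since $\Psi$ is convex with $\Psi(0)=0$, we have $\Psi(4\alpha)\ge 4\Psi(\alpha)$, so this supremum is at least $2\Psi(\alpha)/\alpha$, and for a power-type $\Phi(t)\sim t^\kappa$ it equals $2^{(1+\kappa)/(1-\kappa)}\,\Psi(\alpha)/\alpha$. Hence the inequality you need is simply false as a statement about the supremum, and no choice of evaluation point in the Fenchel--Young inequality for $g$ can rescue it: from the two facts you use about $\gamma_\alpha^\delta$ (minimality and the triangle inequality) one can only conclude
$c_1\norm{\gamma_\alpha^\delta-\gamma^\dagger}_{H^1_0}^2\le \delta^2/\alpha+\Phi(2\delta^2)+\Psi(4\alpha)/(2\alpha)$,
which yields the rates of Corollary~\ref{co:rate} (constants are irrelevant there) but is not the stated inequality. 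Your own item~(2) corroborates this bookkeeping: in the linear case $\Psi$ vanishes on $[0,2/c_2]$, yet the admissible range is $\alpha\le 1/(2c_2)$, which is exactly the condition $\Psi(4\alpha)=0$, not $\Psi(\alpha)=0$. A secondary inaccuracy: your claim that the two power terms of Theorem~\ref{th:main1} are \emph{exactly} $\Phi\bigl(\norm{F(\gamma)-F(\gamma^\dagger)}_{L^2}^2\bigr)$ fails for the second term, since the printed exponent corresponds to $t^{1-q/(qs-s+1)}$ while $\Phi$ contains $t^{(s-1)/q_*}$, and these agree for no admissible $s>1$, $q\ge2$; this discrepancy sits inside the paper itself, but a self-contained proof has to reconcile it rather than assert equality.
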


\begin{proof}
  This is a direct consequence of Theorem~\ref{th:main1}
  and~\cite[Theorem~3.1]{Gra10b}.
\end{proof}

\begin{corollary}\label{co:rate}
  Let the conditions of Theorem~\ref{th:main1} be satisfied.
  \begin{enumerate}
  \item If $q < \infty$ or $s < 2$ we have for the parameter choice
    $\alpha \sim \delta^{2\frac{q_*-s+1}{q_*}}$ the convergence rate
    \[
    \norm{\gamma_\alpha^\delta - \gamma^\dagger}_{H^1_0} = O(\delta^{\frac{s-1}{q_*}}).
    \]
  \item If $q = \infty$ and $s = 2$ we have for a constant parameter
    choice $\alpha \le 1/(2c_2)$ the convergence rate
    \[
    \norm{\gamma_\alpha^\delta - \gamma^\dagger}_{H^1_0} = O(\delta).
    \]
  \end{enumerate}
\end{corollary}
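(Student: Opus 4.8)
The plan is to obtain both rates by inserting the respective parameter choices into the two a priori estimates of the preceding corollary and reading off the behaviour as $\delta \to 0$. Part~(2) is immediate: for $q = \infty$ and $s = 2$ a constant choice $\alpha \le 1/(2c_2)$ gives $c_1\norm{\gamma_\alpha^\delta-\gamma^\dagger}_{H^1_0}^2 \le \delta^2/\alpha + 2c_2\delta^2$, and since $\alpha$ is bounded away from zero the right hand side is $O(\delta^2)$; taking square roots yields the rate $O(\delta)$.

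For part~(1) I would treat the main case $q < \infty$ first and then indicate the boundary cases. Setting $\kappa := (s-1)/q_*$, I would first show that $c_3 t^{\kappa}$ is the dominant term of $\Phi$ as $t \to 0$: since $2 \le q < \infty$ and $1 < s \le 2$ we have $1/q_* = 1 - 1/q < 1 \le 2/s$, hence $\kappa < 2(s-1)/s$ and the term with the smaller exponent prevails. Consequently $\Phi(2\delta^2) = O(\delta^{2\kappa}) = O(\delta^{2(s-1)/q_*})$, which is exactly the target order for $\norm{\gamma_\alpha^\delta-\gamma^\dagger}_{H^1_0}^2$. The contribution $\delta^2/\alpha$ is elementary: with $\alpha \sim \delta^{2(q_*-s+1)/q_*}$ one computes $\delta^2/\alpha \sim \delta^{2(s-1)/q_*}$ as well.

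The hard part is controlling $\Psi(\alpha)/\alpha$, because $\Psi$ is only given implicitly as the convex conjugate of $t \mapsto \Phi^{-1}(2t)$ and $\Phi$ is not a pure power. My approach is to push the near-origin asymptotics of $\Phi$ through the inverse and the Legendre transform: from $\Phi(t) \sim c_3 t^{\kappa}$ one obtains $\Phi^{-1}(y) \sim (y/c_3)^{1/\kappa}$, so $t \mapsto \Phi^{-1}(2t)$ behaves like a convex power with exponent $p = 1/\kappa = q_*/(s-1) > 1$; the convex conjugate of a power $t^p$ is a power $\alpha^{p_*}$ with $p_* = p/(p-1) = q_*/(q_*-s+1)$, so that $\Psi(\alpha) = O(\alpha^{q_*/(q_*-s+1)})$ and $\Psi(\alpha)/\alpha = O(\alpha^{(s-1)/(q_*-s+1)})$. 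The subtle point to justify is that conjugation is a global operation, so these local estimates have to be made rigorous, for instance by sandwiching $\Phi$ between constant multiples of $t^{\kappa}$ near $0$ and exploiting the monotonicity of the Legendre transform (equivalently, by observing that for small $\alpha$ the supremum defining $\Psi(\alpha)$ is attained at a small argument). Granting this and inserting $\alpha \sim \delta^{2(q_*-s+1)/q_*}$ gives $\Psi(\alpha)/\alpha = O(\delta^{2(s-1)/q_*})$.

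Collecting the three contributions, the right hand side of the corollary's estimate is $O(\delta^{2(s-1)/q_*})$, whence $\norm{\gamma_\alpha^\delta-\gamma^\dagger}_{H^1_0}^2 = O(\delta^{2(s-1)/q_*})$ and, after taking square roots, the claimed rate $O(\delta^{(s-1)/q_*})$. Finally, the cases in part~(1) not already covered by the preceding corollary (namely $q < \infty$ with $s = 2$, and $q = \infty$ with $s < 2$) would be handled by the identical balancing argument: in each of them the relevant term of $\Phi$ still has exponent strictly below $1$, so $\Phi$ remains strictly concave near the origin, $\Psi$ stays finite, and the same computation of the three orders goes through, the only change being which of the three terms attains the dominant order $\delta^{2(s-1)/q_*}$.
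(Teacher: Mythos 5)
Your proposal is correct, and it is genuinely more self-contained than the paper's own proof, which consists of a single citation: the paper simply invokes \cite[Corollary~3.1]{Gra10b}, where the balancing of the three terms $\delta^2/\alpha$, $\Phi(2\delta^2)$, $\Psi(\alpha)/\alpha$ and the asymptotics of the convex conjugate are carried out once and for all for general concave index functions. What you do instead is reconstruct that computation explicitly for the specific $\Phi(t) = c_2t^{2(s-1)/s} + c_3t^{(s-1)/q_*}$ at hand: the identification of $c_3t^{(s-1)/q_*}$ as the dominant term near the origin (because $1/q_* < 1 \le 2/s$), the power-law bound $\Psi(\alpha) = O\bigl(\alpha^{q_*/(q_*-s+1)}\bigr)$ obtained by localizing the Legendre transform, and the verification that all three contributions balance to $O(\delta^{2(s-1)/q_*})$ under the choice $\alpha \sim \delta^{2(q_*-s+1)/q_*}$ are all correct, as is the immediate part (2). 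The sandwiching argument you sketch for the conjugate is standard and does go through: in the main case $f(t) := \Phi^{-1}(2t)$ is convex with vanishing one-sided derivative at $0$ and superlinear growth (the latter because $s<2$), so for small $\alpha$ the supremum defining $\Psi(\alpha)$ is attained at small $t$, where $f$ is bounded below by a fixed multiple of $t^{q_*/(s-1)}$. Your route buys transparency --- one sees exactly where each exponent comes from --- at the cost of redoing work that the cited general result packages.

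One point deserves emphasis. The cases $q<\infty$, $s=2$ and $q=\infty$, $s<2$ of part (1) are \emph{not} covered by the preceding corollary as stated (its part (1) requires $q<\infty$ \emph{and} $s<2$), so for these you cannot literally ``insert the parameter choice into the a priori estimate'': that estimate has not been established there. Your proposed fix is right in substance ($\Phi$ is still strictly concave in those cases, hence $\Phi^{-1}(2\,\cdot)$ still strictly convex, and the same machinery applies), but making it honest requires going back one step and applying the general convergence-rate result of \cite{Gra10b} directly to the variational inequality of Theorem~\ref{th:main1}, rather than to the intermediate corollary --- which is in effect what the paper's citation does uniformly for all cases. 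This is a small repair rather than a flaw in the idea, but as written your argument fully proves part (1) only for $q<\infty$ and $s<2$.
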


\begin{proof}
  See~\cite[Corollary~3.1]{Gra10b}.
\end{proof}

\begin{remark}
  Note that the claim of Theorem~\ref{th:main1} in the
  case $s=2$ and $q = \infty$ can be shown much more easily
  with elementary computations.
  Indeed, in this case one has
  \[
  \begin{aligned}
  \norm{\gamma-\gamma^\dagger}_{H^1_0}^2
  & = \norm{\gamma}_{H^1_0}^2 - \norm{\gamma^\dagger}_{H^1_0}^2
  -2\int_{S^1} \dot{\gamma}^\dagger(t)\bigl(\dot{\gamma}(t)-\dot{\gamma}^\dagger(t)\bigr),dt\\
  & = \norm{\gamma}_{H^1_0}^2 - \norm{\gamma^\dagger}_{H^1_0}^2
  +2 \int_{S^1} \ddot{\gamma}^\dagger(t)\bigl(\gamma(t)-\gamma^\dagger(t)\bigr)\,dt\\
  & \le \norm{\gamma}_{H^1_0}^2 - \norm{\gamma^\dagger}_{H^1_0}^2
  +2\norm{\ddot{\gamma}^\dagger}_{L^\infty} \norm{\gamma-\gamma^\dagger}_{L^1}\\
  & = \norm{\gamma}_{H^1_0}^2 - \norm{\gamma^\dagger}_{H^1_0}^2
  + 2\norm{\ddot{\gamma}^\dagger}_{L^\infty} \norm{F(\gamma)-F(\gamma^\dagger)}_{L^2}^2.
  \end{aligned}
  \]
  Thus~\eqref{eq:ineqinfty} holds with $c_1 = 1$ and $c_2 = 2\norm{\ddot{\gamma}^\dagger}_{L^\infty}$.
\end{remark}

\section{Further Aspects}

\subsection{Differentiability}

In Lemma~\ref{le:nondiff} we have shown that $F$ is nowhere
differentiable when regarded as a mapping from
$H^1_+(S^1)$ to $L^2(S^1\times \R_{\ge 0})$.
We now discuss possible different settings, where $F$
has better regularity properties.

Because the range of $F$ consists only of characteristic functions
with compact support, it follows that $F$ can be regarded
as a mapping into any space $L^p(S^1\times \R_{\ge 0})$
with $1 \le p \le \infty$.
Moreover the same computation as in the proof of
Lemma~\ref{le:hoelder} shows that for every $1 \le p < \infty$
we have
\[
\norm{F(\gamma_1)-F(\gamma_2)}_{L^p}^p
= \norm{\gamma_1-\gamma_2}_{L^1} \le \sqrt{2\pi}\norm{\gamma-\gamma_2}_{L^2}.
\]
Thus, seen as a mapping $F\colon L^2(S^1) \to L^p(S^1;\R_{\ge 0})$,
the mapping $F$ is H\"older continuous of degree $1/p$;
in case $p = 1$, this shows that $F$ is Lipschitz continuous.
Note that for $p = \infty$, the mapping $F$ is discontinuous everywhere.
Concerning the differentiability of $F$, however, this change
of the target space does not really matter.
The same argumentation as in Lemma~\ref{le:nondiff} shows that
$F$ is nowhere differentiable as a mapping into $L^p(S^1\times\R_{\ge 0})$
with $1 < p < \infty$.

For $p = 1$ the situation is slightly different:
If $\gamma \in H^1_+(I)$ satisfies $\gamma(t) > 0$ for every $t$
and $\sigma \in H^1(I)$ is fixed, then 
the family of functions $\{\frac{1}{s}\bigl(F(\gamma+s\sigma)-F(\gamma)\bigr)\}_{s>0}$
forms a bounded subset of $L^1(S^1\times\R_{\ge 0})$.
Still, the limit $\lim_{s\to 0^+} \frac{1}{s}(F(\gamma+s\sigma)-F(\gamma))$
does not exist in $L^1(S^1\times\R_{\ge 0})$ and thus, again,
$F$ is nowhere directionally differentiable.

Finally, one can regard $F$ as a mapping from $H^1_+(S^1)$ to
the space $\mathcal{M}(S^1\times \R_{\ge 0})$ of finite Radon measures
on $S^1\times\R_{\ge 0}$.
Because $L^1(S^1\times\R_{\ge 0})$ is isometrically embedded into
$\mathcal{M}(S^1\times \R_{\ge 0})$, this does not change the
Lipschitz continuity of $F$.
However, the difference quotients 
$\frac{1}{s}\bigl(F(\gamma+s\sigma)-F(\gamma)\bigr)$
of $F$ now have a limit in $\mathcal{M}(S^1\times\R_{\ge })$ with
respect to the weak$^*$ topology:
One has
\[
\frac{1}{s}\bigl(F(\gamma+s\sigma)-F(\gamma)\bigr)
\rightharpoonup^* \gamma^\#(\sigma\mathcal{L}^1),
\]
where $\gamma^\#(s\mathcal{L}^1)$ denotes the push forward of 
the one-dimensional Lebesgue measure $\mathcal{L}^1$ weighted by
the functions $\sigma$ via the mapping $\gamma$.
That is,
\[
\gamma^\#(\sigma\mathcal{L}^1)(h)
= \int_{S^1} \sigma(t)\,h\bigl(t,\gamma(t)\bigr)\,dt
\]
for every $h \in C_b(S^1\times \R_{\ge 0})$.

The considerations above show that the setting of the problem can be modified
in such a way that the operator $F$ becomes differentiable.
Thus it might still be possible to derive convergence rates
in a more classical way via a linearization of $F$.
There are, however, several difficulties:
First, the extension to $\mathcal{M}(S^1\times\R_{\ge 0})$ leaves
the Hilbert space setting in favor of a more complicate Banach space
setting (with non-reflexive spaces).
In such a setting, non-standard convergence rates have been
derived via linearization ideas in~\cite{HeiHof09}
by the technique of approximate source conditions.
Second, even the Banach space setting might not be the correct one,
as the operator $F$ is only differentiable with respect to the
weak$^*$ topology and not the norm topology.
Thus it might even be necessary to derive results in general
locally convex spaces.
Finally note that after changing the topological structure,
the similarity term is not the squared norm on the target space any more.
Thus it might even be necessary to turn to results for convergence
rates with more general similarity terms
(see for instance~\cite{Fle10,Gra10b,Poe08}).

\subsection{Differentiability of the Regularization Term}

Interestingly, the regularization functional itself has potentially much better
smoothness properties than $F$.
In order to see this, define for $u \in L^2(S^1\times\R_{\ge 0})$ 
the functional $\mathcal{S}\colon H^1_+(S^1) \to \R_{\ge 0}$,
\[
\mathcal{S}_u(\gamma) := \norm{F(\gamma)-u}_{L^2}^2.
\]
Then it is easy to see that $\mathcal{S}_u$ is Fr\'echet differentiable
whenever $u \in L^2(S^1\times \R_{\ge 0}) \cap C(S^1\times\R_{\ge 0})$
with Fr\'echet derivative
\[
\mathcal{S}_u'(\gamma)
= \Bigl[\sigma \mapsto \int_{S^1} \sigma(t)\bigl(1-2u(t,\gamma(t))\bigr)\,dt\Bigr].
\]
In addition, for $u = F(\gamma)$ the mapping $\mathcal{S}$, while not
differentiable, possesses one-sided directional derivatives
of the form
\[
\mathcal{S}_{F(\gamma)}'(\gamma;\sigma)
= 2\int_{S^1}\abs{\sigma(t)}\,dt
= 2\norm{\sigma}_{L^1}.
\]

The specific form of this one-sided derivative
might possibly yield an explanation of the linear convergence rate
that has been shown in Corollary~\ref{co:rate} for the case where
$\gamma^\dagger \in W^{2,\infty}(S^1)$, as such
a rate is not possible for smooth operators using the squared
Hilbert space norm as a regularization term.
For quadratic regularization of Fr\'echet differentiable
operators, it has been shown that the best possible convergence rate in
non-trivial cases is of order $O(\delta^{2/3})$ (see~\cite{Neu89}).
Linear convergence rates, however, have been derived recently
for Tikhonov regularization with non-smooth regularization terms.
The first result in this direction was~\cite{GraHalSch08},
where linear rates have been derived for regularization 
on the sequence space $\ell^2$ with the $\ell^1$-norm as regularization
term under the assumptions of sparsity of the true solution
and a certain source condition.
This result has also been extended in~\cite{Gra11a}
to more general positively homogeneous regularization terms.
The basis of all these results is the non-smoothness
of the regularization term; the rates derived in this paper
indicate that a sufficient non-smoothness of the operator $F$
to be inverted can have similar effects.

\section{Conclusion}

We have shown in this paper that the approach of source inequalities
for the derivation of convergence rates for Tikhonov regularization
can be applied to non-linear problems where approaches based
on linearization are bound to fail.
Our paradigm was a functional related to the Chan--Vese active contour
model for image segmentation, which was shown to be continuous but nowhere
differentiable in its domain.
Still, the approach by source inequalities allowed us to derive
convergence rates under reasonable (and easily interpretable)
smoothness assumptions on the true solution.
Surprisingly, the convergence rates do not obey the classical
bound of $O(\delta^{2/3})$, which is known to be best possible rate
for quadratic regularization of differentiable functionals.
Instead, for a sufficiently smooth true solution, we were
able to obtain a rate of order $O(\delta)$.
One possible explanation for this exceedingly good behaviour
is a connection to sparse (or $\ell^1$) regularization:
For noise free data the similarity term in our problem
has precisely the same behaviour as the $\ell^1$-regularization term.

\end{document}